\documentclass[12pt]{amsart}

\usepackage[T1]{fontenc}
\usepackage{lmodern}
\usepackage[margin=1in]{geometry}
\usepackage{amsmath,amssymb,amsthm,mathtools}
\usepackage{microtype}
\usepackage[hidelinks]{hyperref}
\usepackage{tikz}

\newtheorem{theorem}{Theorem}
\newtheorem{lemma}[theorem]{Lemma}
\numberwithin{equation}{section}
\newcommand{\R}{\mathbb R}
\newcommand{\Ms}{M_{\mathrm s}}
\newcommand{\Astr}[1]{A_{#1}^{\mathrm{str}}}
\newcommand{\achar}[2]{[#1]_{\Astr{#2}}}
\newcommand{\avg}[2]{\langle #1\rangle_{#2}}
\newcommand{\ind}{\mathbf 1}
\newcommand{\norm}[1]{\left\|#1\right\|}
\DeclareMathOperator{\dist}{dist}

\begin{document}
\title[Failure of the linear $A_2$ bound]
{Failure of the linear $A_2$ bound for the strong maximal operator}

\author[A.K. Lerner]{Andrei K. Lerner}
\address[A.K. Lerner]{Department of Mathematics,
Bar-Ilan University, 5290002 Ramat Gan, Israel}
\email{lernera@math.biu.ac.il}

\thanks{The author was supported by ISF grant no. 807/26.}

\begin{abstract}
We show that the strong maximal operator on $\R^2$ does not satisfy a
linear weighted $L^2$ estimate in terms of the rectangular $A_2$
characteristic. More precisely, we construct weights with arbitrarily
large characteristic for which the operator norm is bounded below by
$\achar{w}{2}\sqrt{\log\achar{w}{2}}$.
The proof is elementary. It uses an explicit two-parameter mass recurrence
on a product of geometric interval partitions, together with a direct
comparison of arbitrary rectangular averages with anchored averages.
\end{abstract}

\keywords{Strong maximal operator, quantitative bounds, rectangular $A_2$ constant.}
\subjclass[2020]{42B25}

\maketitle

\section{Introduction}
Let $M$ denote the Hardy--Littlewood maximal operator on $\R^n$, defined
using cubes with sides parallel to the coordinate axes. For $1<p<\infty$,
Buckley's theorem~\cite{B93} states that
\begin{equation}\label{eq:buckley}
 \norm{M}_{L^p(w)\to L^p(w)}
 \le C_{n,p}[w]_{A_p}^{1/(p-1)},\qquad w\in A_p,
\end{equation}
where
\[
 [w]_{A_p}:=\sup_Q
 \avg{w}{Q}\avg{w^{-1/(p-1)}}{Q}^{p-1},
 \qquad
 \avg{g}{Q}:=\frac1{|Q|}\int_Q g.
\]
This theorem is a starting point of
the theory of quantitative weighted norm inequalities. Importantly,
it gives the sharp dependence on the weight characteristic: the factor
$[w]_{A_p}^{1/(p-1)}$ cannot be replaced by a
function of $[w]_{A_p}$ of smaller order.

Consider now the strong maximal operator
\[
 \Ms f(x):=\sup_{R\ni x}\frac1{|R|}\int_R |f(y)|\,dy,
\]
where the supremum is taken over all bounded axis-parallel rectangles
$R\subset\R^n$. Its natural weight class is $\Astr{p}$, defined by
\[
 \achar{w}{p}:=\sup_R
 \avg{w}{R}\avg{w^{-1/(p-1)}}{R}^{p-1}<\infty.
\]
As in the cubic case, membership in this class characterizes the weighted
$L^p$ boundedness of~$\Ms$, see \cite{BK84}.

The quantitative picture is much less clear. For fixed $n\ge2$ and
$1<p<\infty$, let $\alpha_p$ be the infimum of the exponents $\alpha$
for which
\[
 \norm{\Ms}_{L^p(w)\to L^p(w)}
 \le C_{n,p,\alpha}\achar{w}{p}^{\alpha}
 \qquad\text{for all }w\in\Astr{p}.
\]
The elementary bounds are
\begin{equation}\label{eq:exponent-gap}
 \frac1{p-1}\le\alpha_p\le\frac n{p-1}.
\end{equation}

The lower bound follows from one-dimensional examples, while the upper
bound follows by dominating $\Ms$ by a composition of $n$
one-dimensional maximal operators and applying Buckley's theorem in each
coordinate. Despite the size of the gap in~\eqref{eq:exponent-gap},
to the best of our knowledge neither bound on the optimal power exponent
has been improved. In particular, it has remained open whether the direct
analogue of~\eqref{eq:buckley} holds for the strong maximal operator.
This question was explicitly raised in \cite[Section~5]{LPR17}.

In this note we show that the situation is different from the cubic case:
the estimate with the endpoint power $1/(p-1)$ fails already when
$n=2$ and $p=2$. For simplicity and clarity of exposition, we concentrate
on this case.

\begin{theorem}\label{thm:main}
There exist absolute constants $c,C>0$ such that, for every sufficiently
small $\theta>0$, there is a weight $w_\theta\in\Astr{2}(\R^2)$ satisfying
$$
 c\theta^{-1}\le\achar{w_\theta}{2}\le C\theta^{-1}
$$
and
$$
 \norm{\Ms}_{L^2(w_\theta)\to L^2(w_\theta)}
 \ge c\theta^{-1}\sqrt{\log\frac1\theta}.
$$
Consequently, along this family of weights,
$$
 \norm{\Ms}_{L^2(w_\theta)\to L^2(w_\theta)}
 \ge c\achar{w_\theta}{2}
          \sqrt{\log\achar{w_\theta}{2}}.
$$
In particular, no uniform linear bound in $\achar{w}{2}$ is possible.
\end{theorem}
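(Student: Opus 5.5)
The plan is to build an explicit weight of product-like but non-tensor type on $[0,1)^2$, using a geometric partition in each variable. Set $I_k=[2^{-k-1},2^{-k})$ for $0\le k\le N$, with $N\approx\log\frac1\theta$. The weight $w=w_\theta$ is constant on each cell $I_j\times I_k$. Its value $w_{jk}$ is defined through a two-parameter mass recurrence: the mass $m_{jk}=w_{jk}|I_j||I_k|$ is chosen so that each passage to a smaller scale in either coordinate multiplies the mass by a factor $1-\theta$ (or $1+\theta$). In the simplest guess one takes $w(x_1,x_2)\approx (x_1x_2)^{-1+\theta}$-type powers, modified so that the corner structure is genuinely two-parameter, for example $w_{jk}=2^{(1-\theta)\max(j,k)}$-type or a recurrence $m_{j,k}=\tfrac12(m_{j-1,k}+m_{j,k-1})(1-\theta)$.

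The first step is the upper bound $\achar{w}{2}\le C\theta^{-1}$. I would handle it as the abstract suggests. Any rectangle $R$ is compared with an anchored rectangle $[0,a)\times[0,b)$ or with a union of a bounded number of cells. The averages of $w$ and $w^{-1}$ over such anchored rectangles are then computed via geometric sums. Each sum of the form $\sum_{k\ge K}2^{-k\theta}$ contributes a factor $\theta^{-1}$ for one of $w$ or $w^{-1}$, while the other stays $O(1)$. So the product is $O(\theta^{-1})$ and never $\theta^{-2}$: the recurrence must be designed so that the singular directions of $w$ and $w^{-1}$ do not both accumulate in both coordinates. The lower bound $\achar{w}{2}\ge c\theta^{-1}$ is then immediate from the average over $[0,1)^2$ or a one-dimensional slice.

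The second step is the lower bound for the operator norm. I would test on $f=w^{-1}\ind_{Q}$ with $Q=[0,1)^2$, so $\|f\|_{L^2(w)}^2=w^{-1}(Q)$. For $x$ in cell $I_j\times I_k$, the anchored rectangle $[0,2^{-j})\times[0,2^{-k})$ gives $\Ms f(x)\ge\avg{w^{-1}}{[0,2^{-j})\times[0,2^{-k})}$. Summing $w_{jk}|I_j||I_k|$ against the square of this quantity over the $N^2$ cells produces the two-parameter gain. One-parameter examples give one factor of $\theta^{-2}$ from a single geometric sum, and the extra cross-index sum over $j,k$ should produce the additional factor $\log\frac1\theta$ (roughly $N$ comparable terms, each of size $\theta^{-2}$ relative to $w^{-1}(Q)$). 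This gives $\|\Ms f\|^2_{L^2(w)}\gtrsim \theta^{-2}\log\frac1\theta\,\|f\|^2_{L^2(w)}$, and taking square roots yields the claimed $c\theta^{-1}\sqrt{\log\frac1\theta}$.

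The main obstacle is the tension between the two steps. The weight must be singular enough along a diagonal family of cells to pick up $N$ independent contributions in the norm, yet its $\Astr{2}$ characteristic must stay $O(\theta^{-1})$ rather than $\theta^{-2}$, which a pure tensor power weight would give. The delicate part is therefore the rectangle-comparison lemma: showing that an arbitrary rectangle, including thin rectangles not anchored at the corner and ones straddling many cells in one direction, has averages controlled by anchored ones. I would first try to verify this cell by cell using monotonicity of $w_{jk}$ in each index, which lets a rectangle be enlarged to an anchored one at bounded cost in the relevant average. Then I would tune the recurrence constants so that the anchored computation closes.
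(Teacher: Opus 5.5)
Your outline has the right architecture: a cell-constant weight on a product of geometric partitions, reduction to anchored rectangles, and testing with $w^{-1}$ times an indicator. But there is a genuine gap. No weight is actually specified, and the mechanism you propose for the logarithm cannot work.

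With $N\approx\log\frac1\theta$ scales there are only $O(\log^2\frac1\theta)$ cells. No single cell can contribute more than $\achar{w}{2}\norm{f}_{L^2(w)}^2$ to your test: if $C\subset R\subset[0,1)^2$ and $f=w^{-1}\ind_{[0,1)^2}$, then
\[
 w(C)\avg{w^{-1}}{R}^2\le w(R)\avg{w^{-1}}{R}^2
 =\avg{w}{R}\avg{w^{-1}}{R}\,w^{-1}(R)\le\achar{w}{2}\,\norm{f}_{L^2(w)}^2 .
\]
So your lower bound is at most $C\theta^{-1}\log^2\frac1\theta\,\norm{f}^2_{L^2(w)}$, far short of the needed $\theta^{-2}\log\frac1\theta\,\norm{f}^2_{L^2(w)}$. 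In particular, ``$N$ comparable terms, each of size $\theta^{-2}$'' is impossible. The choice $N\approx\log\frac1\theta$ also contradicts your own heuristic that $\sum_{k\ge K}2^{-k\theta}$ produces $\theta^{-1}$, since that needs about $\theta^{-1}$ scales. The paper's accounting is different. It uses $D=\lfloor c_0/\theta\rfloor$ scales, each output cell contributes about $\theta^{-1}\sigma(Q)$, and there are about $D\log D$ such cells. The logarithm is the lattice-point count $\sum_{r\le D}\lfloor D/r\rfloor$ of the hyperbolic region $\{rs\le D\}$, not a count of scales.

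The missing idea is the weight that makes this count available. The paper prescribes $\sigma=v_\theta^{-1}$ by the rule $\sigma(C_{r,s})=\theta\,\sigma(R_{r,s})$ for $(r,s)\neq(0,0)$, with $\sigma(C_{0,0})=1$. This rule is essentially forced. For $r,s\ge1$ the corner cell alone gives $\avg{v}{R_{r,s}}\avg{\sigma}{R_{r,s}}\ge\sigma(R_{r,s})/(16\,\sigma(C_{r,s}))$, and equality maximizes each output cell's contribution. The explicit solution is $\sigma(R_{r,s})=(1-\theta)^{-(r+s)}\sum_k\binom rk\binom sk\theta^k$. It has two properties you need:
\begin{itemize}
\item its growth factor is $<3/2<4$ per step, which makes the anchored $A_2$ sum $\theta^{-1}\sum_{i,j}(3/8)^{i+j}$ converge;
\item $\sigma(R_{r,s})\le e^{rs\theta}(1-\theta)^{-(r+s)}<2$ on $\{rs\le D\}$.
\end{itemize}
None of your candidate weights is shown to have these properties, and ``tune the recurrence'' leaves the whole difficulty open.

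Your test function is also mismatched. The paper uses $f=\sigma\ind_Q$ with $Q$ the single innermost corner cell, so every anchored rectangle sees the same mass $\sigma(Q)$. With $w^{-1}\ind_{[0,1)^2}$ you normalize by $\sigma([0,1)^2)$. For the weight this rule produces, only $O(\theta^{-1})$ anchored rectangles carry mass comparable to $\sigma([0,1)^2)$, so the hyperbolic count is lost.

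Finally, the step you call delicate is the easy one. The paper's averaging lemma needs no monotonicity: the coefficient comparison $|J\cap I_u|/|J|\le4|I_u|/|P_r|$ controls the averages of $v$ and $v^{-1}$ simultaneously. You also omit the reflection step that extends the weight from $[0,1)^2$ to $\R^2$.
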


The proof is based on an explicit two-parameter construction on the unit square. The reciprocal weight is defined by a simple mass recurrence on a family of nested rectangular cells. The recurrence is chosen so that the rectangular $A_2$ characteristic remains of the expected size, while a large hyperbolic family of rectangles contributes simultaneously to the lower bound for the strong maximal operator. This produces the logarithmic improvement over the linear estimate. An elementary averaging argument then allows us to pass from the special anchored rectangles appearing in the construction to arbitrary rectangles, and a reflection argument extends the example to $\mathbb R^2$.

\section{Proof of Theorem~\ref{thm:main}}\label{sec:proof}
\subsection{The partition and an averaging lemma}
Set $\Omega:=[0,1)^2$. We partition $\Omega$ into pairwise
disjoint cells as follows.
Fix an integer $D\ge1$, set $a=2^{-D}$, and partition $[0,1)$ into
\[
 I_0:=[0,a),\qquad
 I_r:=[2^{r-1}a,2^ra),\quad 1\le r\le D.
\]
For $0\le r,s\le D$, set $C_{r,s}:=I_r\times I_s$. Then
\[
 \Omega=\bigsqcup_{0\le r,s\le D} C_{r,s}.
\]

The cell
\[
 Q:=C_{0,0}=I_0\times I_0
\]
will play a distinguished role in the construction: it is the source
square used in the testing argument.

Write
\[
 P_r:=[0,2^ra)=\bigcup_{u=0}^r I_u,\qquad
 R_{r,s}:=P_r\times P_s.
\]
We will use the elementary relations
\begin{equation}\label{eq:geometry}
 \frac{|C_{u,v}|}{|R_{r,s}|}
 \le 2^{-(r-u)-(s-v)}\quad(u\le r,\ v\le s),
 \qquad
 |C_{r,s}|=\frac14|R_{r,s}|\quad(r,s\ge1).
\end{equation}

\begin{figure}[htbp]
\centering
\begin{tikzpicture}[scale=0.5,>=stealth]

\def\xA{0}
\def\xB{1}
\def\xC{2}
\def\xD{4}
\def\xE{8}
\def\xF{16}

\fill[blue!15] (\xA,\xA) rectangle (\xE,\xD);

\fill[red!45] (\xA,\xA) rectangle (\xB,\xB);

\fill[green!45] (\xD,\xC) rectangle (\xE,\xD);

\draw[thick] (\xA,\xA) -- (\xF,\xA); 
\draw[thick] (\xA,\xA) -- (\xA,\xF); 
\draw[thick,dashed] (\xF,\xA) -- (\xF,\xF); 
\draw[thick,dashed] (\xA,\xF) -- (\xF,\xF); 

\draw (\xB,\xA) -- (\xB,\xF);
\draw (\xC,\xA) -- (\xC,\xF);
\draw (\xD,\xA) -- (\xD,\xF);
\draw (\xE,\xA) -- (\xE,\xF);

\draw (\xA,\xB) -- (\xF,\xB);
\draw (\xA,\xC) -- (\xF,\xC);
\draw (\xA,\xD) -- (\xF,\xD);
\draw (\xA,\xE) -- (\xF,\xE);

\draw[blue, thick] (\xA,\xA) rectangle (\xE,\xD);

\draw[red, thick] (\xA,\xA) rectangle (\xB,\xB);
\draw[green!50!black, thick]
  (\xD,\xC) rectangle (\xE,\xD);

\node[below] at (0.5,0) {$I_0$};
\node[below] at (1.5,0) {$I_1$};
\node[below] at (3,0)   {$I_2$};
\node[below] at (6,0)   {$I_3$};
\node[below] at (12,0)  {$I_4$};

\node[left] at (0,0.5) {$I_0$};
\node[left] at (0,1.5) {$I_1$};
\node[left] at (0,3)   {$I_2$};
\node[left] at (0,6)   {$I_3$};
\node[left] at (0,12)  {$I_4$};

\node[
  red!70!black,
  anchor=north,
  inner sep=2pt
] (Qlabel) at (-1.15,-1.15) {$Q=C_{0,0}$};

\draw[
  ->,
  red!70!black,
  thick,
  shorten <=2pt
] (Qlabel.north) -- (0.5,0.5);

\node[green!50!black] at (6,3) {$C_{3,2}$};

\node[
  anchor=west,
  text=blue!70!black,
  fill=blue!15,
  fill opacity=1,
  text opacity=1,
  inner xsep=1.5pt,
  inner ysep=1pt
] at (1.15,1.5) {$R_{3,2}$};

\end{tikzpicture}

\caption{The partition of $\Omega$ for $D=4$.
The distinguished source square $Q=C_{0,0}$ is shown in red,
the anchored rectangle $R_{3,2}$ in blue, and its upper-right
cell $C_{3,2}$ in green.}
\label{fig:partition}
\end{figure}

\begin{samepage}
\begin{lemma}\label{lem:averaging}
For every interval $J\subset[0,1)$, there is an index
$r$, depending only on $J$, such that
\begin{equation}\label{eq:interval-average}
 \avg{g}{J}\le4\avg{g}{P_r}
\end{equation}
for every nonnegative function $g$ constant on each $I_u$.
Consequently, every positive weight $v$ constant on the cells $C_{r,s}$
satisfies
\begin{equation}\label{eq:local-reduction}
 \sup_{J,K\subset[0,1)}
 \avg{v}{J\times K}\avg{v^{-1}}{J\times K}
 \le256\max_{0\le r,s\le D}
 \avg{v}{R_{r,s}}\avg{v^{-1}}{R_{r,s}},
\end{equation}
where the supremum is over arbitrary intervals $J,K$ of positive length.
\end{lemma}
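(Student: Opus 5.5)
The plan is to choose $r$ explicitly from the position of the right endpoint of $J$, verify \eqref{eq:interval-average} by a two-case argument, and then obtain \eqref{eq:local-reduction} by applying the one-dimensional estimate once in each variable.

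\emph{Choice of $r$ and the one-dimensional estimate.} Write $J=[\alpha,\beta)$ (endpoints do not matter), and let $r$ be the smallest index with $\beta\le 2^r a$. Then $J\subset P_r$, and for $r\ge1$ we have $\beta>2^{r-1}a$, so $J$ meets $I_r$. This choice depends only on $J$, which is essential for the second part. Write $g_u$ for the value of $g$ on $I_u$. I distinguish two cases.

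\emph{Case 1: $\alpha\le 2^{r-2}a$ (with $r\ge2$), or $r\le1$ and $|J|\ge|P_r|/4$.} Here $|J|\ge \beta-2^{r-2}a\ge 2^{r-2}a=|P_r|/4$ in the main subcase. Since $g\ge0$ and $J\subset P_r$,
\[
\avg{g}{J}\le \frac{|P_r|}{|J|}\avg{g}{P_r}\le 4\avg{g}{P_r}.
\]

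\emph{Case 2: otherwise.} Then $J\subset I_{r-1}\cup I_r$ (for $r=1$ this reads $J\subset I_0\cup I_1=P_1$; for $r=0$, $J\subset I_0$). Hence $\avg{g}{J}\le\max(g_{r-1},g_r)$. On the other hand, $|I_r|=|P_r|/2$ and $|I_{r-1}|\ge|P_r|/4$, and positivity gives $\avg{g}{P_r}\ge g_r/2$ and $\avg{g}{P_r}\ge g_{r-1}/4$. So $\avg{g}{J}\le4\avg{g}{P_r}$. The small cases $r=0,1$ are checked directly in the same way, using $|I_0|=|I_1|=|P_1|/2$.

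\emph{Passage to rectangles.} Given $J,K$, let $r=r(J)$ and $s=r(K)$ be the indices just constructed. Let $h$ be $v$ or $v^{-1}$; in either case $h$ is positive and constant on cells. By Fubini, $\avg{h}{J\times K}$ is the $J$-average of $x\mapsto\avg{h(x,\cdot)}{K}$. For each fixed $x$, the function $h(x,\cdot)$ is constant on each $I_u$, so \eqref{eq:interval-average} in $y$ gives $\avg{h(x,\cdot)}{K}\le4\avg{h(x,\cdot)}{P_s}$. The function $x\mapsto\avg{h(x,\cdot)}{P_s}$ is again nonnegative and constant on each $I_u$, so applying \eqref{eq:interval-average} in $x$ yields
\[
\avg{h}{J\times K}\le16\avg{h}{R_{r,s}}.
\]
Because $r$ and $s$ depend only on $J$ and $K$, not on $h$, the same $R_{r,s}$ serves for both $v$ and $v^{-1}$. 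Multiplying the two bounds gives the factor $256$, and taking the supremum gives \eqref{eq:local-reduction}.

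The only real point of care is the one-dimensional case analysis, in particular making sure that a short $J$ straddling the boundary between $I_{r-1}$ and $I_r$ is handled. The geometric growth of the $I_u$ is exactly what makes this work, since $I_{r-1}$ still occupies a quarter of $P_r$. I expect no other obstacle.
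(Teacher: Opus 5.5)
Your proof is correct and follows the paper's argument: your index $r$ (the smallest with $\beta\le 2^r a$) coincides with the paper's choice of the largest $r$ with $|J\cap I_r|>0$. Your two cases ($J\supset I_{r-1}$, so $|J|\ge|P_r|/4$, versus $J\subset I_{r-1}\cup I_r$) are exactly the paper's, and you bound the averages directly where the paper proves the coefficient inequality $|J\cap I_u|/|J|\le 4|I_u|/|P_r|$ and sums; the coordinatewise passage to $R_{r,s}$, with the same rectangle serving $v$ and $v^{-1}$, is also the same.
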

\end{samepage}

\begin{proof}
Let $r$ be the largest index with $|J\cap I_r|>0$. We claim that
\begin{equation}\label{eq:coefficient-comparison}
 \frac{|J\cap I_u|}{|J|}
 \le4\frac{|I_u|}{|P_r|},\qquad 0\le u\le r.
\end{equation}
This is immediate for $r=0,1$. If $r\ge2$ and $J$ meets one of the intervals
$I_0,\ldots,I_{r-2}$ in positive measure, then it contains $I_{r-1}$ up to
endpoints. Hence $|J|\ge|I_{r-1}|=|P_r|/4$, which implies
\eqref{eq:coefficient-comparison}.
Otherwise, $J$ is contained, up to endpoints, in
$I_{r-1}\cup I_r$. For $u\le r-2$, the left-hand side of
\eqref{eq:coefficient-comparison} is zero. For
$u\in\{r-1,r\}$, we use
\[
 \frac{|J\cap I_u|}{|J|}
 \le 1
 \le 4\frac{|I_u|}{|P_r|},
\]
since $|I_{r-1}|=|P_r|/4$ and $|I_r|=|P_r|/2$.
Thus \eqref{eq:coefficient-comparison} holds in this
case as well.

Multiplying~\eqref{eq:coefficient-comparison} by the value
of $g$ on $I_u$ and summing proves~\eqref{eq:interval-average}.

Given $J\times K$, apply~\eqref{eq:interval-average} in each coordinate.
The resulting indices $r,s$ depend only on $J,K$, so for the same rectangle
$R_{r,s}$ we have
\[
 \avg{v}{J\times K}\le16\avg{v}{R_{r,s}},\qquad
 \avg{v^{-1}}{J\times K}\le16\avg{v^{-1}}{R_{r,s}}.
\]
Multiplication proves~\eqref{eq:local-reduction}.
\end{proof}

\subsection{The mass recurrence and the construction of the weight}
From now on let $0<\theta<1/256$ and set
\[
 c_0:=\frac1{32},\qquad
 D:=\left\lfloor\frac{c_0}{\theta}\right\rfloor.
\]
We use the preceding partition with this value of $D$. Our aim is
to construct a weight $v_\theta$ on~$\Omega$, constant on each
cell $C_{r,s}$. It is convenient to construct its reciprocal
$\sigma=v_\theta^{-1}$ first, by prescribing its masses on the cells.

The construction is based on the following rule: we normalize
$\sigma(Q)=1$ and require that every other cell carry a fixed
fraction $\theta$ of the mass of its corresponding anchored rectangle,
that is,
\[
 \sigma(C_{r,s})=\theta\,\sigma(R_{r,s}),
 \qquad (r,s)\ne(0,0).
\]
The numbers $m_{r,s}$ and $S_{r,s}$ introduced below will represent
the cell masses $\sigma(C_{r,s})$ and the rectangle masses
$\sigma(R_{r,s})$, respectively. Since $R_{r,s}$ is the disjoint
union of the cells $C_{u,v}$ with $u\le r$ and $v\le s$, these
numbers must satisfy
\[
 S_{r,s}=\sum_{u=0}^r\sum_{v=0}^s m_{u,v}.
\]
We first give an explicit formula for $S_{r,s}$ and
verify its compatibility with the prescribed cell masses.
The polynomials $T_{r,s}$ defined below are auxiliary quantities used to express
and verify this formula.

For nonnegative integers $r,s$, define
\begin{equation}\label{eq:S-definition}
 T_{r,s}:=\sum_{k\ge0}\binom{r}{k}\binom{s}{k}\theta^k,
 \qquad
 S_{r,s}:=(1-\theta)^{-(r+s)}T_{r,s},
\end{equation}
where binomial coefficients outside their usual range are zero.
Thus the sum is finite and $S_{r,s}>0$. Moreover,
$T_{r,0}=T_{0,s}=1$, so $S_{0,0}=1$ and
\begin{equation}\label{eq:axes}
 S_{r,0}=(1-\theta)^{-r},\qquad
 S_{0,s}=(1-\theta)^{-s}.
\end{equation}
For $r,s\ge1$, Pascal's identity gives
\begin{align*}
 &T_{r,s}-T_{r-1,s}-T_{r,s-1}+T_{r-1,s-1}\\
 &\quad=\sum_{k\ge0}
 \left(\binom{r}{k}-\binom{r-1}{k}\right)
 \left(\binom{s}{k}-\binom{s-1}{k}\right)\theta^k\\
 &\quad=\sum_{k\ge1}
 \binom{r-1}{k-1}\binom{s-1}{k-1}\theta^k
 =\theta T_{r-1,s-1}.
\end{align*}
Equivalently,
\[
 T_{r,s}
 =T_{r-1,s}+T_{r,s-1}-(1-\theta)T_{r-1,s-1}.
\]
Multiplying by $(1-\theta)^{-(r+s-1)}$ and using
\eqref{eq:S-definition}, we obtain
\begin{equation}\label{eq:recurrence}
 (1-\theta)S_{r,s}
 =S_{r-1,s}+S_{r,s-1}-S_{r-1,s-1}.
\end{equation}

For $0\le r,s\le D$, set
\begin{equation}\label{eq:cell-masses}
 m_{0,0}:=1,\qquad
 m_{r,s}:=\theta S_{r,s}\quad\text{if }(r,s)\ne(0,0).
\end{equation}
All these numbers are positive. We now verify that the prescribed
cell masses have the required cumulative masses.
By~\eqref{eq:axes} and~\eqref{eq:recurrence}, they are the mixed
differences of $S_{r,s}$:
\[
 m_{r,s}
 =S_{r,s}-S_{r-1,s}-S_{r,s-1}+S_{r-1,s-1},
 \qquad 0\le r,s\le D,
\]
with the convention that a term with a negative index is zero.
Telescoping in both indices gives
\begin{equation}\label{eq:mass-sum}
 \sum_{u=0}^r\sum_{v=0}^s m_{u,v}=S_{r,s}.
\end{equation}

We can therefore realize these numbers as masses of a positive
density: define
\[
 \sigma(x):=\frac{m_{r,s}}{|C_{r,s}|}
 \quad\text{for }x\in C_{r,s},
 \qquad
 v_\theta:=\sigma^{-1}\quad\text{on }\Omega.
\]
By~\eqref{eq:mass-sum} and the disjoint decomposition of $R_{r,s}$,
\[
 \sigma(R_{r,s})
 =\sum_{u=0}^r\sum_{v=0}^s \sigma(C_{u,v})
 =\sum_{u=0}^r\sum_{v=0}^s m_{u,v}
 =S_{r,s}.
\]
Also, $v_\theta$ has the constant value $|C_{r,s}|/m_{r,s}$ on
$C_{r,s}$. Hence
\begin{equation}\label{eq:weight-masses}
 \sigma(R_{r,s})=S_{r,s},\qquad
 v_\theta(C_{r,s})=\frac{|C_{r,s}|^2}{m_{r,s}}.
\end{equation}
In particular, $\sigma(Q)=1$, and the desired mass rule holds.

The purpose of this rule will become clear in the next two
subsections: it allows us to control the rectangular $A_2$
characteristic by a constant multiple of $\theta^{-1}$, while
the lower bound exploits the many anchored rectangles whose
$\sigma$-mass remains comparable to $\sigma(Q)=1$.

\subsection{The rectangular \texorpdfstring{$A_2$}{A2} estimate}

We first record a uniform growth bound. Another application of Pascal's
identity gives
\[
 T_{r+1,s}-T_{r,s}
 =\theta\sum_{j\ge0}\binom rj\binom s{j+1}\theta^j
 \le s\theta T_{r,s},
\]
because $\binom s{j+1}\le s\binom sj$. Since $s\theta\le c_0$,
\[
 \frac{S_{r+1,s}}{S_{r,s}}
 \le\frac{1+s\theta}{1-\theta}
 \le\frac{1+c_0}{1-\theta}<\frac32.
\]
The same estimate holds in the other coordinate. With $\kappa:=3/2$, we
therefore obtain
\begin{equation}\label{eq:growth}
 \frac{S_{r,s}}{S_{u,v}}
 \le\kappa^{(r-u)+(s-v)},\qquad
 0\le u\le r\le D,\quad 0\le v\le s\le D.
\end{equation}

Notice that $m_{u,v}\ge\theta S_{u,v}$ for every cell, including $(0,0)$.
Using~\eqref{eq:geometry}, \eqref{eq:weight-masses}, and~\eqref{eq:growth},
we have
\begin{align}
 \avg{v_\theta}{R_{r,s}}\avg{\sigma}{R_{r,s}}
 &=\frac{S_{r,s}}{|R_{r,s}|^2}
       \sum_{u\le r,\,v\le s}\frac{|C_{u,v}|^2}{m_{u,v}}\notag\\
 &\le\frac1\theta
       \sum_{u\le r,\,v\le s}
       \frac{S_{r,s}}{S_{u,v}}4^{-(r-u)-(s-v)}\notag\\
 &\le\frac1\theta\sum_{i,j\ge0}\left(\frac\kappa4\right)^{i+j}
 \lesssim\theta^{-1}.\label{eq:anchored-A2}
\end{align}
Lemma~\ref{lem:averaging} now gives the full local estimate
\begin{equation}\label{eq:local-A2}
 \sup_{R\subset \Omega}\avg{v_\theta}{R}\avg{v_\theta^{-1}}{R}
 \lesssim\theta^{-1},
\end{equation}
with the supremum over all axis-parallel rectangles in $\Omega$.

For the reverse inequality, $R_{1,0}$ is the disjoint union of the two
cells $C_{0,0}$ and $C_{1,0}$, each of area $a^2$. Their $\sigma$-masses
are $1$ and $\theta/(1-\theta)$, respectively. Hence
\begin{equation}\label{eq:A2-lower}
 \avg{v_\theta}{R_{1,0}}\avg{\sigma}{R_{1,0}}
 =\frac1{4\theta(1-\theta)}\gtrsim\theta^{-1}.
\end{equation}

\subsection{The lower bound for the maximal operator}

We first show that the masses $S_{r,s}$ remain uniformly bounded on a
large hyperbolic region.
Using
\[
 \binom{r}{k}\le \frac{r^k}{k!},
 \qquad
 \binom{s}{k}\le \frac{s^k}{k!},
\]
we obtain
\[
 T_{r,s}
 \le
 \sum_{k\ge0}\frac{(rs\theta)^k}{(k!)^2}
 \le
 \sum_{k\ge0}\frac{(rs\theta)^k}{k!}
 =
 e^{rs\theta}.
\]
Hence, whenever $r,s\le D$ and $rs\le D$, we have
\[
 rs\theta\le D\theta\le c_0,
\]
and therefore
\[
 T_{r,s}\le e^{c_0}.
\]
Also, since $-\log(1-\theta)\le2\theta$,
\[
 (1-\theta)^{-(r+s)}
 \le e^{2\theta(r+s)}
 \le e^{4c_0},
\]
because $r,s\le D$ and $D\theta\le c_0$.
Consequently,
\begin{equation}\label{eq:mass-hyperbola}
 S_{r,s}\le e^{5c_0}<2
 \qquad
 (1\le r,s\le D,\ rs\le D).
\end{equation}

We next count the pairs $(r,s)$ in this hyperbolic region. For each
$r=1,\dots,D$, the integers $s$ satisfying
\[
 1\le s\le D,
 \qquad
 rs\le D
\]
are exactly
\[
 1\le s\le \left\lfloor\frac Dr\right\rfloor.
\]
Thus
\[
 \#\{(r,s):1\le r,s\le D,\ rs\le D\}
 =
 \sum_{r=1}^D\left\lfloor\frac Dr\right\rfloor.
\]
Since $D/r\ge1$, we may use the elementary estimate
\[
 \lfloor x\rfloor\ge \frac{x}{2},
 \qquad x\ge1,
\]
and hence
\[
 \sum_{r=1}^D\left\lfloor\frac Dr\right\rfloor
 \ge
 \frac D2\sum_{r=1}^D\frac1r
 \gtrsim D\log D.
\]

Therefore, by \eqref{eq:mass-hyperbola},
\begin{equation}\label{eq:energy-lower}
 \sum_{r=1}^D\sum_{s=1}^D\frac1{S_{r,s}}\ge
 \sum_{\substack{1\le r,s\le D\\rs\le D}}
 \frac1{S_{r,s}}\gtrsim D\log D\gtrsim
 \theta^{-1}\log\frac1\theta.
\end{equation}

Take $f_\theta=\sigma\ind_Q$. By construction,
\begin{equation}\label{eq:input}
 \norm{f_\theta}_{L^2(v_\theta;\Omega)}^2
 =\int_Q\sigma^2v_\theta=\sigma(Q)=1.
\end{equation}
For $x\in C_{r,s}$, the rectangle $R_{r,s}$ contains both $x$ and $Q$.
Thus
\[
 \Ms f_\theta(x)\ge\frac1{|R_{r,s}|}\int_Q\sigma
 =\frac1{|R_{r,s}|}.
\]
For $r,s\ge1$, \eqref{eq:geometry}, \eqref{eq:cell-masses} and \eqref{eq:weight-masses} give
\[
 v_\theta(C_{r,s})=\frac{|R_{r,s}|^2}{16\theta S_{r,s}}.
\]
The output cells are pairwise disjoint, and therefore, by (\ref{eq:energy-lower}),
\begin{align}
 \int_{\Omega}(\Ms f_\theta)^2v_\theta
 &\ge\sum_{r,s=1}^D\frac{v_\theta(C_{r,s})}{|R_{r,s}|^2}\notag\\
 &=\frac1{16\theta}\sum_{r,s=1}^D\frac1{S_{r,s}}
 \gtrsim\theta^{-2}\log\frac1\theta.
 \label{eq:local-testing}
\end{align}

\subsection{Extension to the plane}
We finish with an elementary reflection argument. For a weight $v$ on
$[0,1]^2$, write
\[
 [v]_{2,[0,1]^2}
 :=
 \sup_{R\subset [0,1]^2}
 \avg{v}{R}\avg{v^{-1}}{R},
\]
where $R$ ranges over all axis-parallel rectangles of positive measure.

\begin{lemma}\label{lem:reflection}
Let
\[
 \pi(t):=\dist(t,2\mathbb Z)\in[0,1].
\]
If $v$ is a weight on $[0,1]^2$ with
$[v]_{2,[0,1]^2}<\infty$, then its reflected extension
\[
 V(x,y):=v(\pi(x),\pi(y))
\]
satisfies
\[
 [V]_{A_2^{\mathrm{str}}}
 \le 81 [v]_{2,[0,1]^2}.
\]
\end{lemma}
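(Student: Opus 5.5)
The plan is to reduce to a one-dimensional statement applied coordinatewise. First, a rectangle $R=J\times K\subset\R^2$ with $|J|\ge 2$ (or $|K|\ge2$) should be handled by periodicity. Second, a short interval should be handled by comparison with a subinterval of $[0,1]$.

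The one-dimensional fact I aim for is the following. For every bounded interval $J\subset\R$ there is an interval $J'\subset[0,1]$ such that
\[
 \avg{g\circ\pi}{J}\le 9\,\avg{g}{J'}
\]
for every nonnegative $g$ on $[0,1]$, where $J'$ depends only on $J$. Granting this, for $R=J\times K$ apply it in $x$ and in $y$ separately. This is legitimate because $V=v(\pi(x),\pi(y))$ and $V^{-1}=v^{-1}(\pi(x),\pi(y))$ are both of product-pulled-back form, and Fubini lets me apply the one-dimensional bound to each variable with the other frozen. The result is
\[
 \avg{V}{R}\le 81\avg{v}{J'\times K'},\qquad \avg{V^{-1}}{R}\le 81\avg{v^{-1}}{J'\times K'}
\]
with the same $J'\times K'$. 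That only gives $81^2$, so the constant needs sharpening: I want factor $9$ per coordinate for the product of the two averages, i.e. factor $3$ per average per coordinate.

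To prove the one-dimensional claim with constant $3$, consider two cases.

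\emph{Case $|J|\ge 2$.} Then $J$ covers $N\ge1$ full periods $[2k,2k+2)$ plus less than one extra period. On each full period, $\int g\circ\pi = 2\int_0^1 g$. Hence
\[
 \int_J g\circ\pi\le 2(N+1)\int_0^1g \le 2\cdot\tfrac{3}{2}\cdot\tfrac{|J|}{2}\cdot...
\]
Concretely, since $|J|\ge 2N$ and $N+1\le 2N$... Rather than this sloppy estimate, I would use $N=\lfloor |J|/2\rfloor$, so $N+1\le |J|$. This gives $\avg{g\circ\pi}{J}\le 2\avg{g}{[0,1]}$ with $J'=[0,1]$.

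\emph{Case $|J|<2$.} Then $J$ meets at most three consecutive unit intervals $[n,n+1]$, on each of which $\pi$ is an isometry onto $[0,1]$. Let $J_i$ be the pieces and $\pi(J_i)$ their images, which are subintervals of $[0,1]$ sharing an endpoint $0$ or $1$ when adjacent. I take $J'$ to be the union of the images, which is an interval because every image contains $0$ or $1$ and adjacent images are nested or abutting. Since $\pi(J)=J'$, we have $|J'|\le|J|$, and each point of $J'$ has at most $3$ preimages in $J$. Therefore
\[
 \int_J g\circ\pi\le 3\int_{J'}g,\qquad\text{so}\qquad \avg{g\circ\pi}{J}\le 3\frac{|J'|}{|J|}\avg{g}{J'}\le 3\avg{g}{J'}.
\]

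Combining the two coordinates, each of $\avg{V}{R}$ and $\avg{V^{-1}}{R}$ is at most $9$ times the corresponding average over $J'\times K'$. This yields $81[v]_{2,[0,1]^2}$.

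The main obstacle is the bookkeeping that $J'$ is a genuine interval, depends only on $J$, and satisfies $|J'|\le|J|$. For instance, if $J$ straddles the point $1$, the images of the two pieces are $[1-\epsilon_1,1]$ and $[1-\epsilon_2,1]$, which are nested, so $J'$ is the larger one and $|J'|\le|J|$. The key point is that the same $J'$ is used for both $V$ and $V^{-1}$, so the product is controlled by the $[0,1]^2$ characteristic.
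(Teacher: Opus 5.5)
Your argument is correct and is essentially the paper's proof. Both use a one-dimensional comparison $\avg{g\circ\pi}{J}\le 3\avg{g}{J_*}$ with $J_*\subset[0,1]$ depending only on $J$, applied in each coordinate so that $V$ and $V^{-1}$ are compared on the same rectangle $J_*\times K_*$ for a total constant of $9\cdot 9=81$; the only difference is that you split at $|J|=2$ (periodicity for long intervals, $J_*=\pi(J)$ for short ones) where the paper splits at $|J|=1$. In the write-up, delete the abandoned factor-$9$ claim and the unfinished estimate with the dots, and note that in the long case the leftover piece of length $<2$ contributes at most $2\int_0^1 g$ by $2$-periodicity even though it need not lie inside a single period $[2k,2k+2)$.
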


\begin{proof}
For every bounded interval $J\subset\mathbb R$ there is an interval
$J_*\subset[0,1]$, depending only on $J$, such that
\begin{equation}\label{eq:reflection-average}
 \frac1{|J|}\int_J g(\pi(t))\,dt
 \le
 \frac3{|J_*|}\int_{J_*}g(t)\,dt
\end{equation}
for every nonnegative measurable function $g$ on $[0,1]$.

Indeed, if $|J|<1$, then $J$ crosses at most one integer. Its image
$J_*=\pi(J)$ is an interval of length at most $|J|$, and the multiplicity
of $\pi$ on $J$ is at most two. Thus
\[
 \frac1{|J|}\int_J g(\pi(t))\,dt
 \le
 \frac2{|J|}\int_{J_*}g
 \le
 \frac2{|J_*|}\int_{J_*}g.
\]
If $|J|\ge1$, take $J_*=[0,1]$. The interval $J$ meets at most
$|J|+2$ unit intervals, and on each such interval $\pi$ is an isometry
onto $[0,1]$. Consequently,
\[
 \frac1{|J|}\int_J g(\pi(t))\,dt
 \le 3\int_0^1 g.
\]

Applying~\eqref{eq:reflection-average} successively in the two coordinates
shows, for the same rectangle $J_*\times K_*$, that
\[
 \avg{V}{J\times K}
 \le 9\avg{v}{J_*\times K_*},
 \qquad
 \avg{V^{-1}}{J\times K}
 \le 9\avg{v^{-1}}{J_*\times K_*}.
\]
Multiplication proves the lemma.
\end{proof}

We now apply the lemma to $v_\theta$. Recall that $v_\theta$ was originally
defined on
\[
 \Omega=[0,1)^2.
\]
Extend it to the closed square $[0,1]^2$ by assigning arbitrary positive
values on the top and right boundary edges. Since these edges have
Lebesgue measure zero, this extension does not affect any rectangular
averages, and hence
\[
 [v_\theta]_{2,[0,1]^2}
 =
 [v_\theta]_{2,\Omega}.
\]

Define the final weight on $\mathbb R^2$ by
\[
 w_\theta(x,y):=
 v_\theta(\pi(x),\pi(y)).
\]
By~\eqref{eq:local-A2} and Lemma~\ref{lem:reflection},
\[
 [w_\theta]_{A_2^{\mathrm{str}}}
 \lesssim \theta^{-1}.
\]
Since the extension agrees with
$v_\theta$ on $\Omega$, \eqref{eq:A2-lower} gives the matching lower bound.
Likewise, \eqref{eq:input} and~\eqref{eq:local-testing} yield
\[
 \norm{f_\theta}_{L^2(w_\theta)}=1,\qquad
 \norm{\Ms f_\theta}_{L^2(w_\theta)}
 \gtrsim\theta^{-1}\sqrt{\log\frac1\theta}.
\]
This completes the proof of Theorem~\ref{thm:main}.\hfill$\square$

\vskip 2mm
\noindent
{\bf AI disclosure statement.}
The author used ChatGPT (OpenAI) to assist in developing the construction and proof and in preparing the manuscript.
All arguments have been independently verified by the author, who takes full responsibility for the results and exposition.

\end{document}